\documentclass{article}

\usepackage[utf8]{inputenc} \usepackage[T1]{fontenc}    \usepackage[english]{babel}

\usepackage{amsfonts}
\usepackage{nicefrac}       \usepackage{microtype}      \usepackage{lmodern}
\usepackage{amssymb,amsmath,amsthm}

\usepackage{stmaryrd}
\SetSymbolFont{stmry}{bold}{U}{stmry}{m}{n}

\usepackage{bbm,bm}
\usepackage{latexsym}
\usepackage{xcolor}

\usepackage{enumerate}
\usepackage{verbatim}
\usepackage{booktabs}       

\usepackage{url}            \usepackage[colorlinks=true]{hyperref}
\usepackage[numbers,sort&compress,square,comma]{natbib}
\usepackage[capitalise,noabbrev]{cleveref}

\usepackage{geometry}
\geometry{margin=1in}

\usepackage[short]{optidef}
\usepackage{algorithmic,algorithm}

\usepackage{thmtools}

\declaretheorem{theorem}
\declaretheorem{corollary}
\declaretheorem{lemma}

\declaretheoremstyle[qed=$\square$]{definitionwithend}

\declaretheorem[style=definitionwithend]{example}
\declaretheorem[style=definitionwithend]{remark}

\crefname{assumption}{Assumption}{Assumptions}
\crefname{conjecture}{Conjecture}{Conjectures}

\newcommand{\abs}[1]{\ensuremath{\left\lvert #1 \right\rvert}}

\newcommand{\norm}[1]{\ensuremath{\left\lVert #1 \right\rVert}}
\newcommand{\ip}[1]{\ensuremath{\left\langle #1 \right\rangle}}

\let\emptyset\varnothing
\newcommand{\set}[1]{\left\{#1\right\}}

\def\B{{\mathbb{B}}}

\def\R{{\mathbb{R}}}

\def\cF{{\cal F}}

\DeclareMathOperator*{\argmin}{arg\,min}

\DeclareMathOperator{\spann}{span}

\DeclareMathOperator{\dom}{dom}

 \usepackage{soul}

\DeclareMathOperator{\supp}{supp}
\DeclareMathOperator{\dist}{dist}

\newcommand{\Polyak}{\texttt{PolyakGD}}
\newcommand{\Geom}{\texttt{GeomGD}}
\newcommand{\Decay}{\texttt{DecayGD}}

\begin{document}

\title{Optimal Subgradient Methods for Lipschitz Convex Optimization with Error Bounds}
\author{
    Alex L.\ Wang
    \thanks{Purdue University. West Lafayette, IN, USA(\texttt{wang5984@purdue.edu})}
}
\date{\today}
\maketitle

\begin{abstract}
We study the iteration complexity of Lipschitz convex optimization problems satisfying a general error bound. We show that for this class of problems, subgradient descent with either Polyak stepsizes or decaying stepsizes achieves minimax optimal convergence guarantees for decreasing distance-to-optimality.
The main contribution is a novel lower-bounding argument that produces hard functions simultaneously satisfying zero-chain conditions and global error bounds. \end{abstract}

\section{Introduction}

This paper considers 
the iteration complexity of
Lipschitz convex optimization with error bounds. By ``error bound'', we mean that the objective function $f:\R^d\to\R$ satisfies the following growth property:
\begin{equation*}
    f(x) - f_\star \geq h\left(\dist(x, \argmin f)\right)
\end{equation*}
on a neighborhood of $\argmin f$.
Here, $f_\star$ denotes the minimum value of $f$, $\argmin f$ denotes the minimizers of $f$ (nonempty by assumption), and the function $h$ parameterizes the error bound. All norms, including the norm implicit in $\dist(x,\argmin f)$, in this paper are Euclidean.

Error bounds are widespread in the optimization literature~\cite{hoffman2003approximate,zhou2017unified,bolte2007lojasiewicz,bolte2017error}. Indeed, it is known~\cite{bochnak2013real} that for any semialgebraic $f$, 
the following H\"olderian error bound
\begin{equation}
    \label{eq:holder}
    f(x) - f_\star \geq c\dist(x, \argmin f)^{1/\theta}
\end{equation}
holds locally for some choice of $c>0$ and $\theta\in(0,1]$.
This property is also known as the \L{}ojasiewicz inequality~\cite{bolte2007lojasiewicz}.

Simultaneously, a variety of methods have been developed to exploit these error bounds to \emph{accelerate} subgradient methods in different settings. These include the restarting strategies~\cite{yang2018rsg,roulet2017sharpness,necoara2019linear,ding2023sharpness} first suggested by Nemirovski and Nesterov~\cite{nemirovski1985optimal}, decaying stepsize schedules~\cite{johnstone2020faster,goffin1977convergence,eremin1965relaxation}, and ``Polyak'' stepsizes~\cite{polyak1969minimization}.
To illustrate this acceleration, suppose $f$ is $L$-Lipschitz and $\mu$-weakly sharp, i.e.,  $f(x)- f_\star\geq \mu \dist(x,\argmin f)$ for some $\mu\in(0,L]$, and suppose $\dist(x_0,\argmin f)\leq D$.
In this setting, it is not hard to show (see \cref{sec:upper}) that subgradient descent with
the Polyak stepsize~\cite{polyak1969minimization} 
\begin{equation*}
    \Polyak{}:\qquad x_n = x_{n-1} - \frac{f_{n-1} - f_\star}{\norm{g_{n-1}}^2}g_{n-1}
\end{equation*}
decreases the distance-to-optimality at a linear rate:
\begin{equation*}
 \dist(x_n, \argmin f) \le \left(1-\frac{\mu^2}{L^2}\right)^{n/2} D.   
\end{equation*}
If $\mu\in\left(0,L/\sqrt{2}\right]$, then the same convergence guarantee holds for subgradient descent with geometrically decaying stepsizes~\cite{goffin1977convergence,eremin1965relaxation,johnstone2020faster},
\begin{equation*}
    \Geom{}:\qquad x_n = x_{n-1} - \left[(1-\mu^2/L^2)^{(n-1)/2}\frac{\mu D}{L^2}\right]\,g_{n-1}.
\end{equation*}
Here and throughout, we use the shorthand $f_i$ and $g_i$ to denote $f(x_i)$ and any element of $\partial f(x_i)$.

Despite the long line of work on optimization algorithms in the presence of error bounds, lower-bounding arguments are to-date missing from the literature. A core difficulty towards producing such constructions is enforcing global error bounds for general functions $h$. Indeed, a common paradigm for building lower bounds\footnote{This is the technique used to construct optimal lower bounds in the nonsmooth convex, smooth convex, and smooth strongly convex settings.} defines a hard instance only locally at finitely many points before invoking interpolation theory to extend the hard instance globally~\cite{drori2016optimal,drori2017exact,drori2022oracle,nemirovskij1983problem}. 
Unfortunately, these tools provide little control for enforcing global error bounds (outside of the smooth strongly convex setting~\cite{drori2022oracle}).
Even in the simplest setting of weakly-sharp Lipschitz convex optimization, lower bounds exactly matching the guarantees of \Polyak{} or \Geom{} have yet to be exhibited.
Our key contribution in this paper is to provide such a lower-bounding construction for any choice of $h$, thereby showing that both \Polyak{} and 
a generalization of \Geom{} are exactly minimax optimal.
Our lower-bounding construction fully specifies the hard instance on finitely many \emph{subspaces}, before ``gluing'' these pieces togehter.

\subsection{Setup, contributions, and outline}
Let $L,D>0$ and suppose $h:[0,D]\to\R_+$ is an $L$-Lipschitz, convex, increasing function vanishing at $0$.
Let $\cF_{h,L,D}$ denote the set of instances $(f,x_0)$ satisfying
\begin{itemize}
    \item $f:\R^d\to\R$ is $L$-Lipschitz and convex, with minimizers $\argmin f$ and minimum value $f_\star$,
    \item $f(x) - f_\star \geq h(\dist(x, \argmin f))$ for all $x$ satisfying $\dist(x, \argmin f)\leq D$, and
    \item $\dist(x_0, \argmin f)\leq D$.
\end{itemize}

\Cref{sec:upper} collects minor extensions and strengthenings of known algorithmic results~\cite{polyak1969minimization,johnstone2020faster} on the convergence rate of \Polyak{} and subgradient methods with decaying stepsizes.
First, we inductively define a sequence of ``rates'' $\Delta_0,\Delta_1,\dots$ and prove that \Polyak{} produces iterates satisfying $\dist(x_n, \argmin f)\leq \Delta_n$ for all $n=0,1,\dots$ (see \cref{thm:upper}).
This extends the convergence results of~\cite{polyak1969minimization} to the setting of general error bounds $h$.
Next, we define a decaying stepsize algorithm, \Decay{}, which generalizes \Geom{} to the setting of general $h$.
We show that if $h$ is $L/\sqrt{2}$-Lipschitz, then \Decay{} produces iterates with the same convergence guarantees as \Polyak{}, i.e., $\dist(x_n, \argmin f)\leq \Delta_n$.
This simultaneously strengthens the convergence guarantees and weakens the compactness assumption of \cite{johnstone2020faster} in the H\"olderian error bound setting.

\Cref{sec:lower} is our main contribution and constructs a hard instance depending on $h,L,D$ for which \emph{every} subgradient-span method, even with full knowledge of $h,L,D$ \emph{and} $f_\star$, has all iterates $\dist(x_n,\argmin f)\geq \Delta_n$ (see \cref{thm:lower}). 
We deduce that, \Polyak{} is minimax optimal in the presence of any error bound $h$ and that \Decay{} is exactly minimax optimal as long as $h$ is $L/\sqrt{2}$-Lipschitz.

\section{Upper bounds}
\label{sec:upper}
 
\Polyak{} and \Decay{} iteratively set
\begin{equation*}
    x_{n} = x_{n-1} - \eta_{n-1} g_{n-1},
\end{equation*}
where $g_{n-1}\in\partial f(x_{n-1})$ and the stepsize is given by
\begin{equation*}
\eta_{n-1} = \frac{f_{n-1}-f_\star}{\norm{g_{n-1}}^2}\qquad\text{and}\qquad
\eta_{n-1} = \frac{h(\Delta_{n-1})}{L^2}
\end{equation*}
in \Polyak{} and \Decay{} respectively. The sequence
$\Delta_0,\Delta_1,\dots$ is defined via the recursion:  $\Delta_0=D$ and 
\begin{equation}
    \label{eq:Delta}
    \Delta_n \coloneqq \sqrt{\Delta_{n-1}^2 - \frac{h(\Delta_{n-1})^2}{L^2}} \quad\forall n \geq 1.
\end{equation}
Since $h$ is $L$-Lipschitz, we deduce that the sequence $\Delta_0,\Delta_1,\dots$ is well-defined, non-increasing, and converges to zero.
Note that computing the \Polyak{} stepsize requires knowledge of $f_\star$, whereas computing the \Decay{} stepsize requires knowledge of $h, L, D$.
In the H\"olderian error bound setting, i.e., $h(t) = ct^{1/\theta}$, the \Decay{} stepsizes match the ``decaying'' stepsizes of Johnstone and Moulin~\cite{johnstone2020faster} up to lower order terms.

\begin{remark}
The following derivation motivates the recursive definition of $\Delta_n$ and the \Decay{} stepsize. Suppose $\argmin f = \set{x_\star}$ is unique and $\norm{x_{n-1} - x_\star}=\Delta_{n-1}$ \emph{exactly}. By convexity and the error bound, for any $g_{n-1}\in \partial f(x_{n-1})$, we have that
$\ip{g_{n-1},x_{n-1}-x_\star}\geq f_{n-1}-f_\star \geq h(\Delta_{n-1})$. Thus, for any $\eta_{n-1}\geq 0$,
\begin{align*}
    \norm{x_{n-1}-\eta_{n-1} g_{n-1} - x_\star}^2 &= \norm{x_{n-1} - x_\star}^2 -2\eta_{n-1} \ip{g_{n-1}, x_{n-1} - x_\star} + \eta_{n-1}^2\norm{g_{n-1}}^2\\
    &\leq \Delta_{n-1}^2 -\eta_{n-1} h(\Delta_{n-1}) + \eta_{n-1}^2L^2.
\end{align*}
The final expression achieves its minimum value $\Delta_{n-1}^2 - \frac{h(\Delta_{n-1})^2}{L^2}$ at $\eta_{n-1} = \frac{h(\Delta_{n-1})}{L^2}$. 
This matches exactly the definition of $\Delta_n^2$ and the \Decay{} stepsize.
However, this alone is not enough to prove that $\norm{x_n-x_\star} \leq \Delta_n$ for all $n$ as the above calculation only works assuming that $\norm{x_{n-1}-x_\star} = \Delta_{n-1}$ exactly.
\end{remark}

For context, the following lemma states asymptotic behavior of the sequence $\Delta_n$ in the H\"olderian error bound setting. Its proof is standard and left to the appendix.

\begin{lemma}
\label{lem:heb_asymptotics}
Suppose $L,D>0$ and that $h(t) = c t^{1/\theta}$ for some $c>0$ and $\theta\in(0,1]$. Assume that $h$ is $L$-Lipschitz, or equivalently $D^{(1-\theta)/\theta}\leq \frac{L\theta}{c}$. Then, the sequence $\Delta_0,\Delta_1,\dots$ has the following asymptotic behavior
\begin{equation*}
    \Delta_n \equiv \begin{cases}
    \left(\frac{c^2}{L^2}\frac{1-\theta}{\theta} n\right)^{-\theta/2(1-\theta)} & \text{if }\theta <1\\
    \left(1-\frac{c^2}{L^2}\right)^{n/2} D & \text{if } \theta = 1
    \end{cases}.
\end{equation*}
Here, $\equiv$ denotes equality up to lower order terms as $n\to\infty$.
\end{lemma}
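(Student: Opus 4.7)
The plan is to handle the two cases separately. The case $\theta = 1$ is immediate: the recursion \eqref{eq:Delta} reduces to $\Delta_n^2 = (1 - c^2/L^2)\Delta_{n-1}^2$, giving $\Delta_n = (1-c^2/L^2)^{n/2}D$ exactly, where the $L$-Lipschitz hypothesis (which in this case reads $c \leq L$) ensures nonnegativity.

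For $\theta < 1$, I would work with the squared sequence $y_n \coloneqq \Delta_n^2$, satisfying the recursion
\begin{equation*}
    y_n = y_{n-1} - \frac{c^2}{L^2}\, y_{n-1}^{1/\theta}.
\end{equation*}
This is a discretization of the separable ODE $y' = -(c^2/L^2) y^{1/\theta}$, whose solution satisfies $y(n)^{-(1-\theta)/\theta} \sim (c^2/L^2)((1-\theta)/\theta)\,n$. To formalize the heuristic, I would apply the change of variable $\phi(x) \coloneqq x^{-(1-\theta)/\theta}$, for which $\phi'(x) = -((1-\theta)/\theta)\, x^{-1/\theta}$. By the mean value theorem, for each $n$ there exists $\xi_n\in[y_n,y_{n-1}]$ with
\begin{equation*}
    \phi(y_n) - \phi(y_{n-1}) \;=\; \frac{c^2}{L^2}\cdot\frac{1-\theta}{\theta}\cdot\left(\frac{y_{n-1}}{\xi_n}\right)^{1/\theta}.
\end{equation*}

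The main remaining step is to show the ratio $(y_{n-1}/\xi_n)^{1/\theta}$ tends to $1$. Since $y_n$ is non-increasing and bounded below by $0$, it converges to some $y_\infty\geq 0$, and passing to the limit in the recursion forces $y_\infty = 0$. Consequently $y_{n-1}/y_n = 1/(1 - (c^2/L^2)y_{n-1}^{(1-\theta)/\theta}) \to 1$, and since $y_n\leq \xi_n\leq y_{n-1}$, the ratio $(y_{n-1}/\xi_n)^{1/\theta}$ also tends to $1$. An application of Stolz--Ces\`aro then yields $\phi(y_n)/n \to (c^2/L^2)(1-\theta)/\theta$; inverting $\phi$ and taking square roots gives the claimed asymptotic for $\Delta_n$.

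I do not anticipate any serious obstacle: the argument is the standard ODE-discretization route, and the Lipschitz hypothesis on $h$ plays only the auxiliary role of ensuring the recursion produces real numbers (equivalently, $y_1\geq 0$). The mild bookkeeping around the mean-value step—showing $\xi_n/y_n \to 1$—is the one place where care is warranted, but it reduces cleanly to the monotonicity and limit argument above.
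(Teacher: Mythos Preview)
Your proposal is correct and follows essentially the same route as the paper's proof: both treat $\theta=1$ directly, then for $\theta<1$ pass to $y_n=\Delta_n^2$, apply the transform $\phi(x)=x^{-(1-\theta)/\theta}$, and show that the increments $\phi(y_n)-\phi(y_{n-1})$ tend to the constant $(c^2/L^2)(1-\theta)/\theta$ before averaging. The only cosmetic differences are that the paper uses a second-order Taylor expansion (bounding the remainder explicitly via $y_{n-1}/y_n\geq 1-\alpha y_0^{(1-\theta)/\theta}$) and then telescopes, whereas you use the mean value theorem and Stolz--Ces\`aro; these are equivalent in substance.
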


Remarkably, \Polyak{} is able to achieve this convergence guarantee even without knowledge of $h,L,D$. Under an additional Lipschitzness assumption on $h$, we will show that \Decay{} also achieves this guarantee.
\begin{theorem}
    \label{thm:upper}
Suppose $L,D >0$, and $h:[0,D]\to\R_+$ is $L$-Lipschitz, convex, increasing, and vanishes at $0$. Let $(f,x_0)\in \cF_{h,L,D}$ and let $x_0,x_1,\dots$ denote the iterates produced by \Polyak{}. Then, for all $n\geq 0$,
\begin{equation*}
    \dist(x_n, \argmin f) \leq \Delta_n.
\end{equation*}
If $h$ is $\frac{L}{\sqrt{2}}$-Lipschitz, then the same bound holds for the iterates $x_0,x_1,\dots$ produced by \Decay{}.
\end{theorem}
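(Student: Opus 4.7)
The plan is to prove by induction on $n$ the stronger claim $d_n := \dist(x_n, \argmin f) \leq \Delta_n$ for both algorithms, with the base case $n=0$ immediate from $(f,x_0)\in\cF_{h,L,D}$. For the inductive step, I would fix a projection $x_\star \in \argmin f$ of $x_{n-1}$, so that $\norm{x_{n-1}-x_\star} = d_{n-1}$, and expand
$\norm{x_n - x_\star}^2 = d_{n-1}^2 - 2\eta_{n-1}\ip{g_{n-1}, x_{n-1} - x_\star} + \eta_{n-1}^2 \norm{g_{n-1}}^2$.
Convexity gives $\ip{g_{n-1}, x_{n-1} - x_\star} \geq f_{n-1} - f_\star$, Lipschitzness of $f$ gives $\norm{g_{n-1}} \leq L$, and since $d_{n-1} \leq \Delta_{n-1} \leq D$, the error bound applies to give $f_{n-1} - f_\star \geq h(d_{n-1})$. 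These three ingredients are common to both stepsize rules; the analyses then diverge in how the stepsize is substituted.

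For \Polyak{}, plugging in $\eta_{n-1} = (f_{n-1}-f_\star)/\norm{g_{n-1}}^2$ collapses the right-hand side to $d_{n-1}^2 - (f_{n-1}-f_\star)^2/\norm{g_{n-1}}^2$, which is at most $d_{n-1}^2 - h(d_{n-1})^2/L^2$. It then remains to show that $\phi(t) := t^2 - h(t)^2/L^2$ is non-decreasing on $[0, D]$, since this would give $\phi(d_{n-1}) \leq \phi(\Delta_{n-1}) = \Delta_n^2$. I would prove this by writing $\phi(t_2) - \phi(t_1) = (t_2-t_1)(t_2+t_1) - (h(t_2)-h(t_1))(h(t_2)+h(t_1))/L^2$ for $t_2 \geq t_1$ and invoking $h(t)\leq Lt$ (from $h(0)=0$ and $L$-Lipschitzness) together with $L$-Lipschitzness once more. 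The degenerate cases $g_{n-1}=0$ or $f_{n-1}=f_\star$ both force $x_{n-1}\in\argmin f$, so $d_n = 0 \leq \Delta_n$ trivially.

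For \Decay{}, substituting $\eta_{n-1} = h(\Delta_{n-1})/L^2$ and using $\Delta_n^2 = \Delta_{n-1}^2 - h(\Delta_{n-1})^2/L^2$, the target inequality $\norm{x_n - x_\star}^2 \leq \Delta_n^2$ reduces to showing $d_{n-1}^2 - \Delta_{n-1}^2 + 2h(\Delta_{n-1})\bigl(h(\Delta_{n-1}) - h(d_{n-1})\bigr)/L^2 \leq 0$. The first two terms contribute $-(\Delta_{n-1}-d_{n-1})(\Delta_{n-1}+d_{n-1})$. For the cross term I would invoke the $L/\sqrt{2}$-Lipschitz hypothesis on $h$ twice, obtaining $h(\Delta_{n-1}) \leq L\Delta_{n-1}/\sqrt{2}$ and $h(\Delta_{n-1}) - h(d_{n-1}) \leq L(\Delta_{n-1}-d_{n-1})/\sqrt{2}$. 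Multiplying these two bounds and absorbing the factor $2/L^2$ yields an upper bound of $\Delta_{n-1}(\Delta_{n-1}-d_{n-1})$ on the cross term. Summing, the residual is $-d_{n-1}(\Delta_{n-1}-d_{n-1}) \leq 0$, closing the induction.

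The main subtlety is the \Decay{} analysis: the stepsize $h(\Delta_{n-1})/L^2$ is tuned for iterates at distance exactly $\Delta_{n-1}$ from $\argmin f$ (as the preceding remark already motivates), but the actual iterate may be strictly closer, in which case the stepsize is larger than necessary. The constant $1/\sqrt{2}$ in the Lipschitz hypothesis on $h$ is precisely what absorbs this slack: any larger constant would allow the cross term $2h(\Delta_{n-1})(h(\Delta_{n-1})-h(d_{n-1}))/L^2$ to overcome the contraction contributed by $d_{n-1}^2 - \Delta_{n-1}^2$, so I expect the bound $L/\sqrt{2}$ to be tight for this inductive strategy. By contrast, \Polyak{} self-adjusts to the true value of $f_{n-1}-f_\star$ and therefore requires no such restriction on $h$.
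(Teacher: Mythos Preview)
Your proposal is correct and, for \Polyak{}, essentially identical to the paper's argument (same induction, same expansion, same monotonicity of $t\mapsto t^2 - h(t)^2/L^2$). For \Decay{}, you reach the same bound via a slightly more direct route: rather than maximizing the right-hand side over all $\Delta\in[0,\Delta_{n-1}]$, linearizing $h$ to obtain a convex quadratic, and comparing endpoint values as the paper does, you work directly with the actual value $d_{n-1}$ and bound the residual $d_{n-1}^2 - \Delta_{n-1}^2 + 2h(\Delta_{n-1})(h(\Delta_{n-1})-h(d_{n-1}))/L^2$ by applying the $L/\sqrt{2}$-Lipschitz bound twice and factoring. Both routes use the same two consequences of the $L/\sqrt{2}$ hypothesis ($h(\Delta_{n-1})\leq L\Delta_{n-1}/\sqrt{2}$ and the slope bound), so the difference is organizational rather than conceptual; your version is a bit leaner since it skips the detour through the maximization.
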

\begin{proof}
We prove this via induction. The base case, $\dist(x_0, \argmin f)\leq \Delta_0$, holds by the assumption that $(f,x_0) \in \cF_{h,L,D}$. Now, let $n\geq 1$ and let $x_\star$ so that $\Delta\coloneqq \norm{x_{n-1} - x_\star} = \dist(x_{n-1},\argmin f)\leq \Delta_{n-1}$. 
Then,
\begin{align}
    \norm{x_n - x_\star}^2 &= \norm{x_{n-1} - \eta_{n-1}g_{n-1} - x_\star}^2\nonumber\\
    &= \Delta^2 - 2\eta_{n-1}\ip{x_{n-1} - x_\star, g_{n-1}} + \eta_{n-1}^2\norm{g_{n-1}}^2.    \label{eq:norm_expansion}
\end{align}
The remainder of the proof depends on the choice of \Polyak{} or \Decay{} stepsize.

Suppose we pick the \Polyak{} stepsize and that $h$ is $L$-Lipschitz.
As $f$ is convex and satisfies the error bound, we have that
$\ip{g_{n-1}, x_{n-1} - x_\star} \geq f_{n-1} - f_\star \geq h(\dist(x_{n-1},\argmin f)) = h(\Delta)$.
Plugging $\eta_{n-1} = \frac{f_{n-1}-f_\star}{\norm{g_{n-1}}^2}$ and this inequality into \eqref{eq:norm_expansion} gives
\begin{align}
    \norm{x_n-x_\star}^2 
    \leq \Delta^2 - \frac{h(\Delta)^2}{L^2}\leq \max_{\Delta\in[0,\Delta_{n-1}]}\Delta^2 - \frac{h(\Delta)^2}{L^2}.
\end{align}
As $h$ is $L$-Lipschitz, we have that $\Delta^2 - \frac{h(\Delta)^2}{L^2}$ is a nondecreasing function so that $\norm{x_n - x_\star}^2 \leq \Delta_n^2$.

Now, suppose we pick the \Decay{} stepsize and that $h$ is $\frac{L}{\sqrt{2}}$-Lipschitz. Again, as $f$ is convex and satisfies the error bound, we have that
$\ip{g_{n-1}, x_{n-1} - x_\star} \geq f_{n-1} - f_\star \geq h(\dist(x_{n-1},\argmin f)) = h(\Delta)$.
Plugging $\eta_{n-1} = \frac{h(\Delta_{n-1})}{L^2}$ and this inequality into \eqref{eq:norm_expansion} gives
\begin{align*}
    \norm{x_n - x_\star}^2 &= \Delta^2 - 2\frac{h(\Delta_{n-1})h(\Delta)}{L^2} + \frac{h(\Delta_{n-1})^2}{L^4}\norm{g_{n-1}}^2\\
    &\leq \Delta^2 - 2\frac{h(\Delta_{n-1})h(\Delta)}{L^2} + \frac{h(\Delta_{n-1})^2}{L^2}\\
    &\leq \max_{\Delta\in[0,\Delta_{n-1}]}\Delta^2 - 2\frac{h(\Delta_{n-1})h(\Delta)}{L^2} + \frac{h(\Delta_{n-1})^2}{L^2}\\
    &\leq \max_{\Delta\in[0,\Delta_{n-1}]}\Delta^2 - 2\frac{h(\Delta_{n-1})\left[h(\Delta_{n-1}) + \frac{L}{\sqrt{2}}(\Delta - \Delta_{n-1})\right]}{L^2} + \frac{h(\Delta_{n-1})^2}{L^2}.
\end{align*}
Here, the second line bounds $\norm{g_{n-1}}\leq L$ and the last line lower bounds $h(\Delta)$ on the interval $[0,\Delta_{n-1}]$ using the $\frac{L}{\sqrt{2}}$-Lipschitzness of $h$.
Note that the objective function in the final line is convex quadratic in $\Delta$, thus is maximized at one of its endpoints. We deduce
\begin{align*}
    \norm{x_n-x_\star}^2 &\leq  \max\left(\frac{\sqrt{2}}{L}h(\Delta_{n-1})\Delta_{n-1} - \frac{h(\Delta_{n-1})^2}{L^2}, \Delta_{n-1}^2 - \frac{h(\Delta_{n-1})^2}{L^2}\right)\\
    &= \Delta_{n-1}^2 - \frac{h(\Delta_{n-1})^2}{L^2} = \Delta_n^2.
\end{align*}
The final line uses $h(\Delta_{n-1}) \leq \frac{L}{\sqrt{2}} \Delta_{n-1}$.
\end{proof}

\begin{remark}
    \label{rem:lips}
The requirement that $h$ is $\frac{L}{\sqrt{2}}$-Lipschitz for \Decay{} can be replaced by any condition on $h$ 
ensuring that
\begin{equation*}
    \max_{\Delta\in[0,D']}\Delta^2 - 2\frac{h(D')h(\Delta)}{L^2} + \frac{h(D')^2}{L^2}
\end{equation*}
is achieved at its rightmost endpoint for any $D'\in[0,D]$. However, this requirement cannot be removed entirely and is not just an artifact of our analysis: For some choices of $h$, as we will see below, no subgradient method without knowledge of $f_\star$ can match the performance of \Polyak{}. This rules out the possibility of constructing exact lower bounds
for subgradient methods without knowledge of $f_\star$
using zero-chain constructions (defined in \cref{sec:lower}) for these choices of $h$.
\end{remark}
\begin{example}
Consider taking
$L=1$, $D=1$ and $h(t) = \mu t$ where $\mu = \frac{1+\epsilon}{\sqrt{2}}$. Consider two different univariate instances:
The first instance sets $f(x) = \mu\abs{x}$ and $x_0= 0$.
The second instance sets $f(x) = \mu\abs{x-1} - \mu$ and $x_0=0$. Both instances belong to $\cF_{h,L,D}$.
Now, $f(x_0) = 0$ for both instances and furthermore $-\mu\in\partial f(x)$ for both instances. 
Thus, any algorithm receiving $f_0=0$ and $g_0 = -\mu$ is unable to distinguish between these two instances.
Since the minimizers of the two instances are unit distance apart, no algorithm knowing only $h,L,D$ can guarantee
\begin{equation*}
    \dist(x_1,\argmin f) < \frac{1}{2}.
\end{equation*}
On the other hand, \Polyak{} guarantees $\dist(x_1,\argmin f) \leq \sqrt{1 - \mu^2} <1/2$. This is only possible because \Polyak{} can distinguish between the two instances via their minimum values.
\end{example}

\section{Lower bounds}
\label{sec:lower}

We now turn to lower bounds and will show that \Polyak{} is minimax optimal for the class $\cF_{h,L,D}$. In particular, under suitable conditions on $h$, \Decay{} is also minimax optimal. The following lower bound is stated in terms of the sequence $\Delta_0,\Delta_1,\dots$ defined in \eqref{eq:Delta}.

\begin{theorem}
    \label{thm:lower}
Suppose $d\geq N+1$.
Suppose $L,D>0$ and $h:[0,D]\to\R_+$ is $L$-Lipschitz, convex, increasing, and vanishes at 0. There exists an instance $(f_\textup{hard},x_0=0)\in \cF_{h,L,D}$ with $f_\textup{hard}(x_\star) = 0$ and a resisting oracle for $x\mapsto g\in\partial f_\textup{hard}(x)$ so that for any choice of $x_1,\dots, x_N$ satisfying the subgradient-span condition, $x_n\in x_0 + \spann\set{g_0,\dots,g_{n-1}}$, it holds that
\begin{equation*}
    \dist(x_n, \argmin f_\textup{hard}) \geq \Delta_n \qquad\forall n=0,\dots,N.
\end{equation*}
\end{theorem}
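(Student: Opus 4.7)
The plan is to construct a hard instance in $\R^{N+1}$ whose minimizer sits at specific distances from the nested subspaces $V_n := \spann\{e_1,\dots,e_n\}$, and whose subgradient structure forces any subgradient-span method to trap its iterates inside $V_n$ after $n$ queries.

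First I would fix the geometry of the hard instance. Define the coefficients
\begin{equation*}
a_k := \frac{h(\Delta_{k-1})}{L}\quad\text{for }k=1,\dots,N,\qquad a_{N+1} := \Delta_N,
\end{equation*}
and let $x_\star := \sum_{k=1}^{N+1} a_k e_k$. Using $a_k^2 = \Delta_{k-1}^2 - \Delta_k^2$ for $k\leq N$ together with $a_{N+1}^2 = \Delta_N^2$, a telescoping computation gives $\|x_\star\|=D$ and, more importantly, $\dist(V_n, x_\star) = \sqrt{\sum_{k>n} a_k^2} = \Delta_n$ for every $n=0,\dots,N$. If the oracle can guarantee that any algorithm satisfying the subgradient-span condition has $x_n\in V_n$ at step $n$, then the theorem follows from $\dist(x_n, \argmin f_\textup{hard}) \geq \dist(V_n, x_\star) = \Delta_n$.

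Second I would construct $f_\textup{hard}$ as a maximum of convex, $L$-Lipschitz pieces $\phi_0,\phi_1,\dots,\phi_N$, each depending only on a prefix of the coordinates. Concretely, each $\phi_n$ is built on the subspace $V_{n+1}$ (and extended trivially along $V_{n+1}^\perp$) so that its subgradients automatically lie in $V_{n+1}$; it is tuned so that (a) $\phi_n(y_n) = h(\Delta_n)$ where $y_n := \sum_{k\leq n} a_k e_k$, (b) $\phi_n(x_\star) = 0$, and (c) along the nominal trajectory and in a neighbourhood of $V_n$, the piece $\phi_n$ realizes the maximum defining $f_\textup{hard}$, with subgradient $-Le_{n+1}$ available. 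The oracle responds to any query $x\in V_n$ by selecting this subgradient from $\partial\phi_n(x)\subseteq \partial f_\textup{hard}(x)$, yielding the zero-chain property $g_{n-1}\in V_n$ and, by induction, $x_n\in x_0+\spann\{g_0,\dots,g_{n-1}\}\subseteq V_n$.

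Third I would verify the four required properties of $f_\textup{hard}$: convexity and $L$-Lipschitzness (immediate from taking the maximum of convex, $L$-Lipschitz pieces); minimum value $f_\textup{hard}(x_\star)=0$ (by construction of the $\phi_n$); $\argmin f_\textup{hard}=\{x_\star\}$ (by intersecting the zero sets of the $\phi_n$); and the global error bound $f_\textup{hard}(x)\geq h(\dist(x,\argmin f_\textup{hard}))$. Once these are in place, combining the oracle's zero-chain responses with the distance formula $\dist(V_n,x_\star)=\Delta_n$ proved in the first paragraph gives the theorem.

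The main obstacle, which the paper itself flags, is reconciling the error bound with the zero-chain structure. A globally radial piece like $h(\|x-x_\star\|)$ is the natural way to enforce $f_\textup{hard}(x)\geq h(\dist(x,\argmin f_\textup{hard}))$, but its unique subgradient direction $(x-x_\star)/\|x-x_\star\|$ generically has nonzero components along all of $e_{n+1},\dots,e_{N+1}$ and hence destroys the zero-chain at any $x\in V_n$ where this piece is uniquely active. The hard part of the proof is the ``gluing'' of the $\phi_n$'s described in the second paragraph: the pieces must be chosen so that at every $x\in V_n$ (not only at the nominal iterate $y_n$), some piece $\phi_k$ with $k\leq n$ remains active, thereby keeping a subgradient in $V_{n+1}$ inside the subdifferential while the radial growth condition continues to hold globally through the collective behaviour of the maximum.
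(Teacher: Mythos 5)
Your setup (the coordinates of $x_\star$, the telescoping identity $\dist(\spann\set{e_1,\dots,e_n},x_\star)=\Delta_n$, and the reduction of the theorem to a zero-chain property of the oracle) matches the paper, but the actual content of the theorem is missing: the pieces $\phi_n$ are never constructed. You list the properties they should have ((a)--(c)) and then, in your final paragraph, correctly identify that making such pieces exist compatibly with the global error bound is ``the hard part'' --- which is precisely the part the proposal leaves as a black box. The paper's proof consists almost entirely of supplying this construction: it defines $\phi_{n+1}$ by taking the restriction $f_n=f\circ\Pi_n^*$ of the radial function $f=h(\norm{\cdot-x_\star})$, passing to its conjugate, and extending to one extra coordinate with the slope term $\sqrt{1-\norm{p}^2}$, so that $\phi_{n+1}$ agrees with $f$ on the prefix subspace (hence a low-index piece is always active there, \cref{lem:resisting}), is $1$-Lipschitz and convex (\cref{lem:lipschitz}), and satisfies $\phi_{n+1}(x_\star)\leq 0$ so that the minimizer, the value $0$, and the error bound survive the max (\cref{lem:sharp}). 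None of these steps, nor any substitute for them, appears in the proposal, so it is an outline of the paper's strategy rather than a proof.

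Moreover, the concrete recipe you do sketch in item (c) cannot be realized. If $-Le_{n+1}\in\partial f_\textup{hard}(x)$ for \emph{every} $x\in V_n$ (which is what your oracle needs, since the algorithm may query any point of $V_n$), then for $x,x'\in V_n$ one gets $f_\textup{hard}(x')\geq f_\textup{hard}(x)+\ip{-Le_{n+1},x'-x}=f_\textup{hard}(x)$, so $f_\textup{hard}$ is constant on $V_n$; with $f_\textup{hard}(y_n)=h(\Delta_n)$ this contradicts the required bound $f_\textup{hard}(x)\geq h(\norm{x-x_\star})>h(\Delta_n)$ at points of $V_n$ whose distance to $x_\star$ lies in $(\Delta_n,D]$, which exist for every $n\geq 1$. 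The resolution in the paper is exactly the weaker requirement you state at the end: the returned subgradient is supported in the first $n+1$ coordinates but is \emph{not} a fixed direction (its first $n$ components are generally nonzero), and arranging this while preserving the error bound is the content of the conjugate-based gluing. As written, the proposal therefore has a genuine gap at the theorem's central construction.
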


The subgradient-span condition is relatively benign and holds for all standard Euclidean methods utilizing a subgradient oracle such as subgradient descent, bundle methods, cutting plane methods, etc.

We will without loss of generality assume $L=D =1$ (the general case follows by a rescaling argument) and that $d = N+1$ (the general case follows by a padding argument).

The function $f_\textup{hard}:\R^{N+1}\to\R$ that we will construct will have a unique minimizer, $x_\star\in\R^{N+1}$ where
\begin{equation*}
    (x_\star)_{i} = \begin{cases}
    -h(\Delta_{i-1}) &\text{if }i \in[1,N]\\
    -\Delta_N &\text{if }i = N+1
    \end{cases}.
\end{equation*}
By a telescoping argument, it holds that for all $n=0,\dots,N$,
\begin{equation}
    \label{eq:distance}
    \sum_{i=n+1}^{N+1} (x_\star)_i^2 = \sum_{i=n}^{N-1} h(\Delta_i)^2 + \Delta_N^2 = \sum_{i=n}^{N-1} (\Delta_{i}^2 - \Delta_{i+1}^2) + \Delta_N^2  = \Delta_{n}^2.
\end{equation}

Abusing notation, we let $h:\R_+\to\R_+$ denote an $1$-Lipschitz, convex, increasing extension of $h$. Set $f(x) = h(\norm{x-x_\star})$ and note that $f(x)$ is convex and $1$-Lipschitz. Furthermore, since $h$ is increasing, $x_\star$ is the unique minimizer of $f$. Additionally, $f$ satisfies the error bound
\begin{equation*}
    f(x) - f_\star \geq h(\norm{x - x_\star})\quad\text{on}\quad\set{x\in\R^{N+1}:\, \norm{x - x_\star}\leq D}.
\end{equation*}

We will ``obfuscate'' $f(x)$ by defining
\begin{equation*}
 f_\textup{hard}(x) = \max\left(\phi_1(x),\phi_2(x),\dots,\phi_{N+1}(x), f(x)\right)   ,
\end{equation*}
where each $\phi_{n+1}(x)$ is a carefully chosen extension of a restriction of $f(x)$ to a coordinate space.

For $n=0,\dots,N$, let $\Pi_n:\R^{N+1}\to\R^n$ denote restriction onto the first $n$ coordinates, and let $\Pi_n^*$ denote its adjoint.
Let $f_n:\R^n\to\R$ denote the restriction of $f$ to the first $n$ coordinates and let $z_n$ denote \emph{its} minimzer, i.e.,
\begin{equation*}
f_n(x) \coloneqq f(\Pi_n^* x)\qquad\text{and}\qquad
z_n\coloneqq \Pi_n x_\star.
\end{equation*}
Note that $f_n$ is a $1$-Lipschitz convex function in $n$ dimensions. When extending $f_n$ to one additional coordinate, we will use the additional coordinate to increase the norm of the subgradients at every point in the original coordinate space to exactly 1. We achieve this by modifying the convex conjugate, $f_n^*$, of $f_n$. This is natural as the convex conjugate encodes gradient information.

Since $f_n$ is $1$-Lipschitz, we have that $\dom(f_n^*)\subseteq\B(0,1)$ and we can write
\begin{equation*}
    f_n(x) = f_n^{**}(x) = \sup_{\norm{p}\leq 1} \ip{p,x} - f_n^*(p).
\end{equation*}
We extend $f_n:\R^n\to\R$ to $g_n:\R^n\times\R\to\R$ by modifying this expression for $f_n$:
\begin{align*}
    g_n(x,y)&\coloneqq \sup_{\norm{p}\leq 1}\ip{\begin{pmatrix}
    p\\
    \sqrt{1-\norm{p}^2}
    \end{pmatrix}, \begin{pmatrix}
    x\\
    y
    \end{pmatrix}} - f_n^*(p).
\end{align*}
Finally, define $\phi_{n+1}(x) = g_n(\Pi_n x, x_{n+1})$.

In the case of $n=0$, the above construction degenerates (and is equivalent) to
\begin{align*}
    f_0:\R^0\to\R\quad&\text{maps}\quad f_0(0)= f(0)\\
    f_0^*:\R^0\to\R\quad&\text{maps}\quad f^*(0)= -f(0)\\
    g_0:\R^0\times\R\to\R\quad&\text{maps}\quad g_0(0,y) = y + f(0)\\
    \phi_1:\R^{N+1}\to\R\quad&\text{maps}\quad \phi_1(x) = x_1 + f(0)
\end{align*}

\begin{lemma}
\label{lem:lipschitz}
For all $n=0,\dots,N$,
$\phi_{n+1}:\R^{N+1}\to\R$ is a $1$-Lipschitz convex function.
\end{lemma}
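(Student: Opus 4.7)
The plan is to prove both properties for $g_n:\R^n\times\R\to\R$ first and then transfer them to $\phi_{n+1}$ by composition with a coordinate projection. The key observation, and apparently the design principle behind the construction, is that in the defining supremum for $g_n$, each index $p$ with $\norm{p}\leq 1$ produces an affine function in $(x,y)$ whose gradient is the augmented vector $(p,\sqrt{1-\norm{p}^2})$. A direct calculation shows this gradient has Euclidean norm exactly $1$. So we can view $g_n$ as a supremum of $1$-Lipschitz affine functions, from which convexity and $1$-Lipschitzness should drop out immediately.

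First I would quickly check that $g_n$ is proper (finite-valued) on $\R^n\times\R$, so that it is a well-defined convex function rather than $\pm\infty$. Since $f_n$ is finite-valued, convex, and $1$-Lipschitz, $\dom(f_n^*)\subseteq\B(0,1)$ is nonempty and $\inf_p f_n^*(p)=-f_n(0)$ is finite; hence $g_n(x,y)$ is bounded below by the value at any fixed $p_0\in\dom(f_n^*)$ and above by $\norm{(x,y)}+f_n(0)$ via Cauchy--Schwarz. Next, since each affine function in the supremum is convex and $1$-Lipschitz (its gradient having norm $1$), and the supremum of a family of convex $1$-Lipschitz functions is convex and $1$-Lipschitz whenever it is finite-valued, $g_n$ inherits both properties. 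The degenerate case $n=0$ can be handled directly: $g_0(0,y)=y+f(0)$ is plainly $1$-Lipschitz and convex.

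Finally, I would observe that the map $\Psi_n:\R^{N+1}\to\R^n\times\R$ sending $x\mapsto(\Pi_n x,x_{n+1})$ is simply the coordinate projection onto the first $n+1$ coordinates, which is linear and $1$-Lipschitz. Hence $\phi_{n+1}=g_n\circ\Psi_n$ is the composition of a convex $1$-Lipschitz function with a $1$-Lipschitz linear map, so it is itself convex and $1$-Lipschitz on $\R^{N+1}$.

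There is not really a serious obstacle here; the only point requiring any care is the verification that $\norm{(p,\sqrt{1-\norm{p}^2})}=1$ for all $p$ in the unit ball, since this is exactly what calibrates the Lipschitz constant of $g_n$ to $1$ rather than something larger like $\sqrt{2}$. Everything else is standard facts about suprema of affine functions and composition with linear maps.
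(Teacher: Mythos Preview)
Your proposal is correct and follows essentially the same approach as the paper: both argue that $\phi_{n+1}$ is a pointwise supremum of affine functions with unit-norm gradients $(p,\sqrt{1-\norm{p}^2})$, hence convex and $1$-Lipschitz, together with a finiteness check. You are somewhat more explicit than the paper (separating out the composition with the projection $\Psi_n$ and bounding $g_n$ both above and below), whereas the paper only verifies $\phi_{n+1}(0)<\infty$ at a single point, which already suffices since a supremum of $1$-Lipschitz affine functions that is finite somewhere is finite everywhere.
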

\begin{proof}
By definition, $\phi_{n+1}$ is the pointwise supremum of $1$-Lipschitz affine functions and is thus $1$-Lipschitz and convex. It remains to rule out that $\phi_{n+1}(x)= +\infty$ for all $x$.
Note that
\begin{align*}
    \phi_{n+1}(0)&= \sup_{\norm{p}\leq 1}\inf_x f_n(x) - \ip{p,x} \leq \inf_x \sup_{\norm{p}\leq 1} f_n(x)- \ip{p,x}\\
    &\leq \sup_{\norm{p}\leq 1}f_n(0) - \ip{p,0} = f(0)  <\infty.\qedhere
\end{align*}
\end{proof}

The following expression for $f_n^*$ will be useful in bounding $\phi_{n+1}$ in the following lemma.
\begin{align*}
    f_n^*(p) &\coloneqq \sup_{x} \ip{p,x} - f_n(x)\\
    &= \sup_{x} \ip{p,x} - h\left(\norm{\Pi_n^* x - x_\star}\right)\\
    &= \sup_{x} \ip{p,x} - h\left(\sqrt{\norm{x - z_n}^2 + \Delta_n^2 }\right)\\
    &= \ip{p,z_n} + \sup_{\delta}\ip{p,\delta} - h\left(\sqrt{\norm{\delta}^2 + \Delta_n^2 }\right)\\
    &=\ip{p,z_n} + \sup_{\alpha\geq 0}\alpha\norm{p} - h\left(\sqrt{\alpha^2 + \Delta_n^2 }\right).
\end{align*}

\begin{lemma}
\label{lem:sharp}
For all $n=0,\dots,N$, it holds that $\phi_{n+1}(x_\star)\leq 0$.
\end{lemma}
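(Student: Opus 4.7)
The plan is to unfold $\phi_{n+1}(x_\star)$ using the displayed formula for $f_n^\ast$ and then reduce the resulting two-level variational problem to a one-dimensional inequality indexed by $r = \norm{p} \in [0,1]$. Substituting the expression
\begin{equation*}
f_n^\ast(p) = \ip{p,z_n} + \sup_{\alpha\geq 0}\alpha\norm{p} - h\bigl(\sqrt{\alpha^2+\Delta_n^2}\bigr)
\end{equation*}
into the definition of $\phi_{n+1}(x_\star) = g_n(z_n,(x_\star)_{n+1})$, the inner product $\ip{p,z_n}$ cancels, leaving
\begin{equation*}
\phi_{n+1}(x_\star) = \sup_{r\in[0,1]} \left[\sqrt{1-r^2}\,(x_\star)_{n+1} + G_n(r)\right],\qquad G_n(r) \coloneqq \inf_{\alpha\geq 0} h\bigl(\sqrt{\alpha^2+\Delta_n^2}\bigr) - \alpha r.
\end{equation*}
Since $(x_\star)_{n+1} = -h(\Delta_n)$ for $n<N$ and $(x_\star)_{N+1} = -\Delta_N$, it suffices to prove $G_n(r) \leq \sqrt{1-r^2}\,\abs{(x_\star)_{n+1}}$ for every $r\in[0,1]$.

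To estimate $G_n(r)$, I would use the $1$-Lipschitzness of (the extension of) $h$ to write $h(\sqrt{\alpha^2+\Delta_n^2}) \leq h(\Delta_n) + \sqrt{\alpha^2+\Delta_n^2} - \Delta_n$, producing the upper bound
\begin{equation*}
G_n(r) \leq h(\Delta_n) - \Delta_n + \inf_{\alpha\geq 0}\left[\sqrt{\alpha^2+\Delta_n^2} - \alpha r\right].
\end{equation*}
The remaining univariate minimization is a standard first-order calculation: the minimizer is $\alpha = r\Delta_n/\sqrt{1-r^2}$ (valid since $r\in[0,1)$; the case $r=1$ is handled by taking limits or replacing $\inf$ by a supporting affine bound), and the minimum value equals $\Delta_n\sqrt{1-r^2}$. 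Hence $G_n(r) \leq h(\Delta_n) - \Delta_n\bigl(1-\sqrt{1-r^2}\bigr)$.

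With this bound in hand the two cases follow by direct algebra. For $n<N$, the desired inequality $G_n(r) \leq \sqrt{1-r^2}\,h(\Delta_n)$ rearranges to $\bigl(1-\sqrt{1-r^2}\bigr)(h(\Delta_n) - \Delta_n) \leq 0$, which is true because $h$ is $1$-Lipschitz with $h(0)=0$, giving $h(\Delta_n)\leq \Delta_n$. For $n=N$, the target $G_N(r) \leq \sqrt{1-r^2}\,\Delta_N$ simplifies even more cleanly to $h(\Delta_N) \leq \Delta_N$, which holds for the same reason. The degenerate case $n=0$ can be checked directly from the explicit formula $\phi_1(x)=x_1+f(0)$: since $(x_\star)_1=-h(D)$ and $f(0) = h(\norm{x_\star}) = h(\Delta_0) = h(D)$ by \eqref{eq:distance}, one gets $\phi_1(x_\star)=0$.

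I expect the main conceptual obstacle to be recognizing the right way to bound $G_n(r)$: the naive choice $\alpha=0$ yields $G_n(r)\leq h(\Delta_n)$, which is too weak once $r>0$, while choosing $\alpha$ optimally against $h$ itself is intractable because $h$ is arbitrary. The trick is to use $1$-Lipschitzness of $h$ to linearize, thereby replacing $h$ by a tractable square-root-minus-linear function whose minimizer $\alpha=r\Delta_n/\sqrt{1-r^2}$ produces exactly the factor $\sqrt{1-r^2}$ needed to match $\abs{(x_\star)_{n+1}}$ on the right-hand side. Everything else is bookkeeping and relies only on $h(\Delta_n)\leq\Delta_n$.
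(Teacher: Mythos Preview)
Your argument is correct and is essentially the same as the paper's proof: both unfold $\phi_{n+1}(x_\star)$ via the formula for $f_n^\ast$, reduce to a $\sup_{r}\inf_{\alpha}$ problem in the scalar $r=\norm{p}$, use $1$-Lipschitzness of $h$ to replace $h(\sqrt{\alpha^2+\Delta_n^2})$ by $h(\Delta_n)+\sqrt{\alpha^2+\Delta_n^2}-\Delta_n$, minimize the resulting square-root-minus-linear expression over $\alpha$, and conclude from $h(\Delta_n)\leq\Delta_n$. The only cosmetic differences are that you isolate $G_n(r)$ explicitly and handle $n=0$ separately (the paper absorbs it into the general $n<N$ case), and that for $n=N$ the paper first bounds $-\Delta_N\leq -h(\Delta_N)$ to reduce to the $n<N$ display, whereas you carry out the algebra directly.
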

\begin{proof}
If $n=0,\dots,N-1$, then
\begin{align}
    \phi_{n+1}(x_\star)&= g_n(z_n, -h(\Delta_n))\nonumber\\
    &= \sup_{\norm{p}\leq 1}\inf_{\alpha\geq 0} - h(\Delta_n)\sqrt{1-\norm{p}^2} - \alpha\norm{p} + h\left(\sqrt{\alpha^2 + \Delta_n^2}\right)\label{equation1}\\
    &= \sup_{0\leq \beta\leq 1}\inf_{\alpha\geq0}  - h(\Delta_n)\sqrt{1-\beta^2} - \alpha\beta + h\left(\sqrt{\alpha^2 + \Delta_n^2}\right)\nonumber\\
    &\leq \sup_{0\leq \beta\leq 1}\inf_{\alpha\geq0}  - h(\Delta_n)\sqrt{1-\beta^2} - \alpha\beta + 
    h(\Delta_n) + \sqrt{\alpha^2 + \Delta_n^2} - \Delta_n \nonumber\\
    &= \sup_{0\leq \beta\leq 1}\left(\Delta_n - h(\Delta_n)\right) \left(\sqrt{1-\beta^2} - 1\right) = 0.\nonumber
\end{align}
Here, the first inequality follows by upper bounding $h(\sqrt{\alpha^2+\Delta_n^2})$ by $1$-Lipschitzness of $h$.
The last line follows by minimizing in $\alpha$.

If $n = N$, then
\begin{align*}
    \phi_{n+1}(x_\star)&= g_n(z_n, -\Delta_n)\\
    &= \sup_{\norm{p}\leq 1}\inf_{\alpha\geq 0} - \Delta_n\sqrt{1-\norm{p}^2} - \alpha\norm{p} + h\left(\sqrt{\alpha^2 + \Delta_n^2}\right)\\
    &\leq \sup_{\norm{p}\leq 1}\inf_{\alpha\geq 0} - h(\Delta_n)\sqrt{1-\norm{p}^2} - \alpha\norm{p} + h\left(\sqrt{\alpha^2 + \Delta_n^2}\right).
\end{align*}
Here, the inequality follows as $h$ is $1$-Lipschitz. The remainder of the proof follows from the previous case upon recognizing \eqref{equation1}.
\end{proof}

The following lemma will be useful for bounding the performance of general subgradient-span methods.
\begin{lemma}
\label{lem:resisting}
The constructed function $f_\textup{hard}$ satisfies the ``zero-chain condition'': For any $x\in\R^{N+1}$ with $\supp(x)\subseteq [k]$, there exists $g\in \partial f_\textup{hard}(x)$ satisfying $\supp(g)\subseteq[k+1]$. In particular, there exists a resisting oracle for $x\mapsto g \in\partial f_\textup{hard}(x)$ that returns such a subgradient.
\end{lemma}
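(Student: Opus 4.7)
The plan is to locate, for each $x$ with $\supp(x)\subseteq[k]$, an index $j^*\in\{1,\dots,k+1\}$ such that $\phi_{j^*}(x) = f_\textup{hard}(x)$, and then extract a subgradient of $f_\textup{hard}$ from $\partial\phi_{j^*}(x)$. The crucial structural observation is that $\phi_{n+1}$ depends on $x$ only through $\Pi_n x$ and $x_{n+1}$, hence only on the first $n+1$ coordinates; consequently, every $g\in\partial\phi_{n+1}(x)$ satisfies $\supp(g)\subseteq[n+1]$, which lies within our budget whenever $n+1\leq k+1$.

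The key technical step is to verify that $\phi_{n+1}(x) = f(x)$ for every $n\geq k$. Since $\supp(x)\subseteq[k]\subseteq[n]$, we have $x_{n+1}=0$, so
\begin{equation*}
\phi_{n+1}(x) = g_n(\Pi_n x, 0) = \sup_{\norm{p}\leq 1} \ip{p,\Pi_n x} - f_n^*(p).
\end{equation*}
Since $f$ is $1$-Lipschitz and $\Pi_n^*$ is isometric, $f_n$ is $1$-Lipschitz and convex, so $f_n^*(p) = +\infty$ whenever $\norm{p}>1$. Therefore the constrained supremum above equals $f_n^{**}(\Pi_n x) = f_n(\Pi_n x) = f(\Pi_n^* \Pi_n x)$. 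Finally, $\Pi_n^*\Pi_n x = x$ because $\supp(x)\subseteq[n]$, yielding $\phi_{n+1}(x) = f(x)$.

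In particular $\phi_{k+1}(x) = f(x)$ and $\phi_{n+1}(x) = f(x)$ for every $n\geq k+1$, so
\begin{equation*}
f_\textup{hard}(x) = \max\left(\phi_1(x),\dots,\phi_{N+1}(x), f(x)\right) = \max_{1\leq j\leq k+1}\phi_j(x).
\end{equation*}
Choosing any $j^*\in\{1,\dots,k+1\}$ attaining this maximum, the subdifferential calculus for pointwise maxima of convex functions gives $\partial\phi_{j^*}(x)\subseteq\partial f_\textup{hard}(x)$. Any $g\in\partial\phi_{j^*}(x)$ then satisfies $\supp(g)\subseteq[j^*]\subseteq[k+1]$, as desired; the resisting oracle is defined to return such a $g$ at each query.

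The main obstacle is the identity $\phi_{n+1}(x)=f(x)$ for $n\geq k$ under the support hypothesis. Once one recognizes that $g_n(\cdot,0) = f_n^{**}(\cdot) = f_n(\cdot)$ via the Lipschitz-induced bound $\dom f_n^*\subseteq\set{p:\norm{p}\leq 1}$, the remainder is routine bookkeeping about coordinate restrictions and the max-rule for subdifferentials.
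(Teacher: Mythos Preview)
Your proof is correct and follows essentially the same approach as the paper: you establish $\phi_{n+1}(x)=f(x)$ for all $n\geq k$ via Fenchel--Moreau (using $\dom f_n^*\subseteq\B(0,1)$), collapse the max to $\max_{1\leq j\leq k+1}\phi_j(x)$, and then apply the max-rule for subdifferentials together with the coordinate dependence of $\phi_{j^*}$. Your write-up is slightly more explicit about why the constrained supremum equals $f_n^{**}$, but the argument is the same.
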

\begin{proof}
First, we claim that for all $n\geq k$, $\phi_{n+1}(x)= f(x)$.
\begin{align*}
    \phi_{n+1}(x) &= g_n(\Pi_n x, x_{n+1}) = g_n(\Pi_n x, 0)\\
    &= \sup_{\norm{p}\leq 1} \ip{p, \Pi_n x} - f_n^*(p)\\
    &= f_n(\Pi_n x) = f(\Pi_n^*\Pi_n x) = f(x).
\end{align*}
Here, the third line follows by Fenchel--Moreau.
We deduce that $\phi_{k+1}(x)=\phi_{k+2}(x) = \dots = \phi_{N+1}(x) = f(x)$.

Thus,
\begin{equation*}
    f_\textup{hard}(x)= \max\set{\phi_1(x),\phi_2(x),\dots,\phi_{N+1}(x), f(x)}
\end{equation*}
must have some active component $\phi_{n+1}(x)$ with $n\leq k$. Then, $\partial \phi_{n+1}(x)\subseteq \partial f_\textup{hard}(x)$. This completes the proof as $\phi_{n+1}$ depends only on the first $n+1\leq k+1$ coordinates of $x$.
\end{proof}

Collecting these lemmas together gives a proof of \cref{thm:lower}.
\begin{proof}[Proof of \cref{thm:lower}]
By \cref{lem:lipschitz,lem:sharp}, it holds that 
$f_\textup{hard}$ is $L$-Lipschitz, convex, with unique minimizer $x_\star$. Furthermore, $f_\textup{hard}(x)-f_\textup{hard}(x_\star) \geq f(x) = h(\norm{x-x_\star})$ holds on the neighborhood $\set{x:\, \norm{x - x_\star}\leq D}$. By \eqref{eq:distance}, it holds that $\norm{x_\star} \leq D$. Thus,
$(f_\textup{hard},x_0=0)\in\cF_{h,L,D}$.

Let $x_0,x_1,\dots$ denote a sequence of iterates produced by some subgradient-span method on this instance against the resisting oracle from \cref{lem:resisting}.
We claim by induction that $\supp(x_n)\subseteq [n]$ for all $n$. The base case $\supp(x_0) = \emptyset$ holds.
Now, suppose $\supp(x_n)\subseteq [n]$ for all $n \leq n_0$. By \cref{lem:resisting}, $\supp(g_n)\subseteq[n+1]$ for all $n\leq n_0$. Thus, by the subgradient-span condition, $\supp(x_{n_0+1}) \subseteq [n_0+1]$.

Finally, since $\supp(x_n)\subseteq[n]$, we deduce that
\begin{equation*}
    \norm{x_n - x_\star} \geq \inf_{x':\, \supp(x')\subseteq [n]} \norm{x' - x_\star} = \norm{z_n - x_\star} = \Delta_n.\qedhere
\end{equation*}
\end{proof}

Since the minimum value of $f_\textup{hard}$ on the coordinate space $\set{x:\, \supp(x)\subseteq[n]}$ is $f_\textup{hard}(z_n)=h(\Delta_n)$, we deduce the following corollary to \cref{thm:lower}.
\begin{corollary}
Suppose $d\geq N+1$.
Suppose $L,D>0$ and $h:[0,D]\to\R_+$ is $L$-Lipschitz, convex, increasing, and vanishes at 0. There exists an instance $(f_\textup{hard},x_0=0)\in \cF_{h,L,D}$ with $f_\textup{hard}(x_\star) = 0$ and a resisting oracle for $x\mapsto g\in\partial f_\textup{hard}(x)$ so that for any choice of $x_1,\dots, x_N$ satisfying the subgradient-span condition, $x_n\in x_0 + \spann\set{g_0,\dots,g_{n-1}}$, it holds that
\begin{equation*}
    f_\textup{hard}(x_n) - f_\textup{hard}(x_\star) \geq h(\Delta_n) \qquad\forall n=0,\dots,N.
\end{equation*}
\end{corollary}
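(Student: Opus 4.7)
The plan is to reuse the hard instance $(f_\textup{hard}, x_0 = 0)$ and the resisting oracle constructed in the proof of \cref{thm:lower}, and then upgrade the distance lower bound to a function-value lower bound. First, I would confirm that $f_\textup{hard}(x_\star) = 0$: by construction, $f(x_\star) = h(\norm{x_\star - x_\star}) = h(0) = 0$, and \cref{lem:sharp} shows $\phi_{n+1}(x_\star) \le 0$ for all $n$, so the maximum defining $f_\textup{hard}$ evaluated at $x_\star$ equals $0$.

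Next, I would invoke the support-containment fact already established in the proof of \cref{thm:lower}: any iterate $x_n$ produced by a subgradient-span method against the resisting oracle of \cref{lem:resisting} satisfies $\supp(x_n) \subseteq [n]$. The key observation is that on this coordinate subspace, we can lower-bound $f_\textup{hard}$ by $f$ alone. Indeed, for any $x$ with $\supp(x) \subseteq [n]$, the last $N+1-n$ coordinates of $x - x_\star$ coincide with those of $-x_\star$, so
\begin{equation*}
    \norm{x - x_\star}^2 \;\geq\; \sum_{i=n+1}^{N+1} (x_\star)_i^2 \;=\; \Delta_n^2,
\end{equation*}
where the equality is exactly the telescoping identity \eqref{eq:distance}. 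Since $h$ is increasing, this gives $f(x) = h(\norm{x - x_\star}) \geq h(\Delta_n)$, and hence $f_\textup{hard}(x) \geq f(x) \geq h(\Delta_n)$.

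Putting the two ingredients together, $f_\textup{hard}(x_n) - f_\textup{hard}(x_\star) = f_\textup{hard}(x_n) \geq h(\Delta_n)$ for every $n = 0, \dots, N$, as required. There is no genuine obstacle here: all the substantive work — building $f_\textup{hard}$, establishing membership in $\cF_{h,L,D}$, the sharpness bound $\phi_{n+1}(x_\star) \leq 0$, and the support-containment argument for iterates — was already carried out in \cref{lem:lipschitz,lem:sharp,lem:resisting} and the proof of \cref{thm:lower}. The corollary is a direct translation of the distance bound into a function-value bound by restricting $f_\textup{hard}$ to a coordinate subspace and noting that $z_n$ is the closest point to $x_\star$ in that subspace with value exactly $h(\Delta_n)$.
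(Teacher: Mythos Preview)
Your proposal is correct and follows essentially the same route as the paper: both reuse the instance and resisting oracle from \cref{thm:lower}, use the support containment $\supp(x_n)\subseteq[n]$, and then lower-bound the function value on that coordinate subspace by $h(\Delta_n)$. If anything, your argument is slightly cleaner: you only need the inequality $f_\textup{hard}(x)\geq f(x)=h(\norm{x-x_\star})\geq h(\Delta_n)$, whereas the paper phrases it as the minimum of $f_\textup{hard}$ over $\set{x:\supp(x)\subseteq[n]}$ being \emph{equal to} $h(\Delta_n)$ at $z_n$, an equality that is not strictly needed for the corollary.
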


In the H\"olderian error bound setting, i.e., $h(t) = ct^{1/\theta}$, we may replace $\Delta_n$ with its asymptotic expression (see \cref{lem:heb_asymptotics}) to deduce that at least
\begin{equation*}
    n\geq N_\epsilon \equiv \begin{cases}
    \frac{\theta}{1-\theta}\frac{L^2}{c^{2\theta}}\epsilon^{-2(1-\theta)} & \text{if }\theta<1\\
    2 \frac{\log(cD/\epsilon)}{\log(1/(1- c^2/L^2))} &\text{if } \theta = 1
    \end{cases}
\end{equation*}
iterations is required to guarantee $f_n-f_\star \leq \epsilon$ on the class $\cF_{h,L,D}$.
Here,
$N_\epsilon = \min\set{n:\, h(\Delta_n)\leq \epsilon}$ and $\equiv$ denotes equivalence up to lower order terms as $\epsilon\to0$.
This matches the guarantees provided by the restarting strategy for $\cF_{h,L,D}$ up to constants (see \cite{roulet2017sharpness,yang2018rsg}) and essentially recovers the bounds stated by Nemirovski and Nesterov~\cite[Equation 1.21]{nemirovski1985optimal} for H\"older-smooth convex optimization.

\section{Open questions}
We leave open the following interesting extensions:
\begin{itemize}
    \item \textbf{Minimax optimal methods in the non-Euclidean setting}: Restarted mirror descent~\cite{ding2023sharpness} provides accelerated convergence guarantees where growth is measured in an $\ell_p$ norm. 
The lower bounds presented in \cref{sec:lower} are fundamentally Euclidean and are unlikely to extend to the non-Euclidean setting.
    \item \textbf{Subgame perfect methods}: Recent work~\cite{grimmer2025subgame,drori2016optimal,grimmer2024beyond} has formalized the ``subgame perfect'' notion of dynamic optimality (a stronger notion than minimax optimality) and exhibited such methods and matching lower bounds for smooth and nonsmooth convex minimization.
    In both cases, the subgame perfect methods are dynamic reoptimizations of known minimax optimal methods.
    We leave the extension of the minimax optimal \Polyak{} and \Decay{} and their matching lower bounds in the static setting to the dynamic setting for future work.
\end{itemize}

\section*{Acknowledgments}
We would like to thank Lijun Ding for helpful discussions.

{
    \small
    \bibliographystyle{plainnat}

}

\appendix

\section{Proof of \cref{lem:heb_asymptotics}}

Throughout this appendix, $\equiv$ denotes equality up to lower order terms as $n\to\infty$.

The case $\theta = 1$ is straightforward. Thus, suppose $L,D>0$ and that $h(t) = c t^{1/\theta}$ for some $c>0$ and $\theta\in(0,1)$. Assume that $h$ is $L$-Lipschitz on $[0,D]$.
As $h$ is $L$-Lipschitz with $\theta < 1$, we deduce that $h(D)<LD$ so that $D^2 - \frac{h(D)^2}{L^2}> 0$. 

Our first step is to reparameterize the inductive update by defining $x_n = \Delta_n^2$:
\begin{equation*}
    x_0 = D^2\qquad\text{and}\qquad
    x_{n+1} = \Delta_{n+1}^2 = \Delta_n^2 - \frac{h(\Delta_n)^2}{L^2} = x_n - \frac{c^2}{L^2}x_n^{1/\theta}.
\end{equation*}
Let $a = 1/\theta>1$ and $\alpha = c^2/L^2 >0$ so that $x_{n+1} = x_n - \alpha x_n^a$. From above, it holds that $x_0 - \alpha x_0^a>0$.

\begin{lemma}
Suppose a sequence $x_0,x_1,\dots,$ is defined recursively by $x_{n+1} = x_n - \alpha x_n^a$ where $a>1$, $\alpha >0$ and 
$x_0>0$ satisfies $x_0 - \alpha x_0^a > 0$. Then, the sequence grows asymptotically as:
\begin{equation*}
    x_n \equiv \left[\alpha(a-1)n\right]^{-1/(a-1)}.
\end{equation*}
\end{lemma}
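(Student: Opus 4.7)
The plan is to linearize the recursion via the substitution $y_n = x_n^{-(a-1)}$, which converts the multiplicative near-$1$ update on $x_n$ into an additive near-constant increment on $y_n$. Then a Stolz--Ces\`aro-style averaging argument yields the asymptotic.

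First I would verify the sequence is positive, strictly decreasing, and tends to $0$. Writing the recursion as $x_{n+1} = x_n(1 - \alpha x_n^{a-1})$, the hypothesis $x_0 - \alpha x_0^a > 0$ is equivalent to $\alpha x_0^{a-1} < 1$. By induction, since $x_n$ is decreasing (and so $\alpha x_n^{a-1} \leq \alpha x_0^{a-1} < 1$), we have $0 < x_{n+1} < x_n$ for all $n$. The bounded monotone sequence converges to some $x_\infty \geq 0$; passing to the limit in the recursion forces $\alpha x_\infty^a = 0$, so $x_\infty = 0$.

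Next, set $b = a-1 > 0$ and $y_n = x_n^{-b}$. Then
\begin{equation*}
    y_{n+1} = x_{n+1}^{-b} = x_n^{-b}(1 - \alpha x_n^{b})^{-b} = y_n(1 - \alpha x_n^b)^{-b}.
\end{equation*}
Since $x_n \to 0$, the quantity $u_n := \alpha x_n^b \to 0$, so the Taylor expansion $(1-u)^{-b} = 1 + bu + O(u^2)$ gives
\begin{equation*}
    y_{n+1} = y_n\bigl(1 + b\alpha x_n^b + O(x_n^{2b})\bigr) = y_n + b\alpha\, y_n x_n^b + O(y_n x_n^{2b}).
\end{equation*}
Using the identity $y_n x_n^b = 1$, this telescopes to
\begin{equation*}
    y_{n+1} - y_n = b\alpha + O(x_n^b) = b\alpha + o(1).
\end{equation*}

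Applying the Ces\`aro mean (equivalently, summing the above from $0$ to $n-1$ and dividing by $n$) yields $y_n/n \to b\alpha$, i.e., $y_n \equiv \alpha(a-1)n$. Inverting the substitution gives $x_n = y_n^{-1/b} \equiv [\alpha(a-1)n]^{-1/(a-1)}$, as desired. The only subtlety is the uniformity of the error terms in the Taylor expansion, but since $x_n \to 0$ monotonically, the remainders are eventually as small as needed, making this step routine rather than a genuine obstacle. The real ``content'' of the argument is the choice of substitution $y_n = x_n^{-(a-1)}$, which is the discrete analogue of the separation of variables $dy/dx = -\alpha x^a \Rightarrow y \sim [\alpha(a-1)n]^{-1/(a-1)}$ that one would apply to the continuous-time ODE.
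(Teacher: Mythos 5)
Your proposal is correct and follows essentially the same route as the paper: both pass to $y_n = x_n^{-(a-1)}$ (the paper's $\phi(x)=x^{-(a-1)}$), show the increments equal $(a-1)\alpha + o(1)$ using the a priori bound $\alpha x_n^{a-1}\le \alpha x_0^{a-1}<1$ to control the second-order error, and sum to conclude $y_n\equiv \alpha(a-1)n$. The only cosmetic difference is that you expand $(1-u)^{-(a-1)}$ directly while the paper invokes Taylor's theorem for $\phi$ with a Lagrange remainder; the content is identical.
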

\begin{proof}
First, we check that $x_n$ is well-defined, decreasing, and tending to zero.
By assumption, $x_0 \in (0, \alpha^{-1/(a-1)})$.
Then, by the formula $x_{n+1} = x_n(1 - \alpha x_n^{a-1})$, we deduce that $x_{n}$ is a decreasing positive sequence.
Let $x_\infty = \lim_{n\to\infty}x_n \in[0,\alpha^{-1/(a-1)})$. Passing to limits in the formula $x_{n+1} = x_n(1 - \alpha x_n^{a-1})$ shows $x_\infty = x_\infty(1 - \alpha x_\infty^{a-1})$ and that $x_\infty = 0$.

Let $\phi(x) = x^{-(a-1)}$.
We will prove that the sequence $\phi(x_0),\phi(x_1),\dots$ is asymptotically linear.
The function $\phi$ is $C^\infty$ on the positive line. 
Let $\delta = \alpha x_{n-1}^{a}$.
By Taylor's Theorem,
\begin{equation*}
    \phi(x_n) - \phi(x_{n-1}) = \phi(x_{n-1} - \delta) - \phi(x_{n-1})= \left[-\delta \phi'(x_{n-1})\right] + \left[\phi''(x)\frac{\delta^2}{2}\right]
\end{equation*}
for some $x\in[x_n, x_{n-1}]$. 
The first square-bracketed term is exactly $(a-1)\alpha$.
We now upper bound the second square-bracketed term. Note that
\begin{equation*}
    \frac{x_n}{x_{n-1}} = 1 - \alpha x_{n-1}^{a-1} \geq 1 - \alpha x_{0}^{a-1} >0.
\end{equation*}
Thus,
\begin{align*}
    \phi''(x)\frac{\delta^2}{2} &= a(a-1)x^{-(a+1)}\frac{\left(\alpha x_{n-1}^a\right)^2}{2}\leq \frac{\alpha^2a(a-1)}{2}\left(\frac{x_{n-1}}{x_n}\right)^{a+1}x_{n-1}^{a-1}\\
    &\leq \frac{\alpha^2a(a-1)}{2}\left(\frac{1}{1 - \alpha x_{0}^{a-1}}\right)^{a+1}x_{n-1}^{a-1} = o(1).
\end{align*}
Plugging this back in gives
\begin{align*}
    \phi(x_N) - \phi(x_0) &= \sum_{n=1}^N\phi(x_n) - \phi(x_{n-1})\\
    &\leq N(a-1)\alpha + \sum_{n=1}^N o(1)\\
    &= N(a-1)\alpha + o(N).
\end{align*}
We conclude that $\phi(x_n) \equiv \alpha(a-1)n$. Translating back to the $x_n$ sequence completes the proof.
\end{proof} 
\end{document}